\theoremstyle{plain}
\newtheorem{theorem}{Theorem}
\newtheorem{lemma}[theorem]{Lemma}
\newtheorem*{theorem*}{Theorem}
\begin{document}
\author{Iwan Praton}
\address{Franklin \& Marshall College} \email{ipraton@fandm.edu}
\title{Maximal Tilings with the Minimal Tile Property}
\date{}
\maketitle

\begin{abstract}
A tiling of the unit square is an MTP tiling if the smallest
tile can tile all the other tiles. We look at the function 
$f(n)=\max \sum s_i$, where $s_i$ is the side length
of the $i$th tile and the sum is taken over all MTP tilings
with $n$ tiles. If $n=k^2+3$, it was conjectured that
$f(k^2+3)=k+1/k$. We show that any tiling that violates
the conjecture must consist of at least three tile sizes
and has exactly one minimal tile. 
\end{abstract}

\section*{Introduction and notation}
Suppose we tile a unit square with small squares of side lengths
$s_1,\ldots,s_n$. (Thus the interiors of the small squares are
disjoint, and their total area is 1.) We define the function $f(n)=
\max \sum s_i$ where the maximum is taken over all such
tilings with $n$ square tiles. Erd\H{o}s and Soifer 
\cite{ErdosSoifer}
posed the problem of determining $f(n)$, although their 
question was more general in that
they considered packings of the unit squares, not just
tilings.  They also presented
conjectural values for $f(n)$, e.g., $f(k^2+1)=k$ and $f(k^2+3)
=k+1/k$. Erd\H{o}s offered \$50 for a proof or disproof,
so the problem appeared difficult.

In order to make the problem more computationally tractable, Alm 
\cite{Alm} 
introduced an extra condition. We say that a tiling $T$
of the unit square has the \emph{Minimal Tile Property} (MTP) if
every tile in $T$ can be tiled by the smallest tile. We thus have
the (hopefully) easier problem of determining $f_M(n)=
\max \sum s_i$, where the maximum is taken over all MTP
tilings of the unit square. 

If $n=k^2+3$ (where $k\geq 2$), the conjectured value of
$f(n)$ in 
\cite{ErdosSoifer}
is $k+1/k$, and this value is realized by an
MTP tiling: the standard $k\times k$ grid of small squares,
where one of the tiles is further divided into 4 smaller square
tiles. Thus a natural conjecture arises: $f_M(k^2+3)=k+1/k$.
The present paper studies this conjecture. In particular,
we show that if we want to look for counterexampes to the
conjecture, then we only need to consider tilings with 
exactly one minimal tile. 

We begin with some notations. All subsequent tilings are
MTP tilings of the unit square; the number of tiles
is $k^2+3$ for $k\geq 2$. If $T$ is a tiling with tiles
of side lengths $s_1,\ldots,s_n$, then we define 
the length of the tiling $\sigma(T)$ to be $\sum s_i$.
More generally, if $S$ is a collection of tiles, we
define $\sigma(S)$ to be the sum of the side lengths
of tiles in $S$.

We write the side length of the smallest tile as $1/a$;
note that MTP implies that $a$ is an integer. We usually
say that $T$ consists of $n_i$ tiles with side lengths
$m_i/a$, with $m_1< m_2<\cdots$. Note that $m_1=1$
and each $m_j$ is an integer (again by MTP). 

We also recall the parameter $\gamma(T)=\sum_{i<j}
(s_i-s_j)^2$ introduced in
\cite{StatonTyler}. 
We use a version that
seems more suitable for our purposes:
\[
\delta(T)=\sum_{i<j} n_in_j(m_i-m_j)^2 - \sum_i n_im_i^2.
\]

\begin{lemma}
If $\delta(T)\geq 0$, then $\sigma(T)<k+1/k$.
\end{lemma}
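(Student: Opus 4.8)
The plan is to rewrite $\delta(T)$ in closed form using the two constraints that \emph{every} tiling satisfies: the tile count $\sum_i n_i = n = k^2+3$ and the area identity $\sum_i n_i m_i^2 = a^2$, which comes from the fact that the $n_i$ tiles of side $m_i/a$ have total area $1$. Write $M=\sum_i n_i m_i$, so that $\sigma(T)=M/a$. The first step is the elementary identity
\[
\sum_{i<j} n_i n_j (m_i-m_j)^2 = \Bigl(\sum_i n_i\Bigr)\Bigl(\sum_i n_i m_i^2\Bigr) - \Bigl(\sum_i n_i m_i\Bigr)^2 ,
\]
the standard expansion of a weighted sum of squared pairwise differences (a ``weighted variance'' identity). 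One proves it by expanding the left-hand side, doubling it to get a sum over all $i\neq j$, completing each of the two resulting sums to a full double sum over all $i,j$ while subtracting the diagonal terms, and then dividing by $2$; the diagonal contributions cancel.

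Substituting the count and area constraints into this identity gives $\sum_{i<j} n_i n_j (m_i-m_j)^2 = n\,a^2 - M^2$, and therefore
\[
\delta(T) = n\,a^2 - M^2 - a^2 = (n-1)a^2 - M^2 = (k^2+2)\,a^2 - M^2 .
\]
Thus the hypothesis $\delta(T)\geq 0$ is exactly the statement $M^2 \leq (k^2+2)a^2$, i.e.\ $\sigma(T) = M/a \leq \sqrt{k^2+2}$.

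It then remains only to note that $\sqrt{k^2+2} < k + 1/k$ for all $k\geq 1$: squaring the right-hand side gives $(k+1/k)^2 = k^2 + 2 + 1/k^2 > k^2 + 2$. Chaining this with the previous inequality yields $\sigma(T) < k + 1/k$, which is the claim.

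I do not expect a genuine obstacle in this lemma. The only point requiring care is the bookkeeping in the pairwise-difference identity — tracking the diagonal terms when passing between $\sum_{i<j}$, $\sum_{i\neq j}$, and the full double sum — after which the argument is a single substitution followed by the trivial bound $\sqrt{k^2+2}<k+1/k$. It is worth observing that this estimate uses only the area and count constraints and nothing about MTP beyond what is already built into the notation; the substantive work of the paper will presumably lie in controlling the tilings for which $\delta(T)<0$.
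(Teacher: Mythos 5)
Your proof is correct, and it reaches the same bound $\sigma(T)\leq\sqrt{k^2+2}<k+1/k$ as the paper, but by a somewhat different (and more self-contained) route. The paper quotes the Staton--Tyler formula $\sigma(T)=\sqrt{k^2+3-\gamma(T)}$ from the literature and then observes that $\delta(T)\geq 0$ forces $\gamma(T)\geq \sum_i n_i(m_i/a)^2=1$, i.e.\ it interprets the hypothesis as saying the ``spread'' term is at least the total area. You instead prove the underlying identity yourself: the weighted-variance expansion
\[
\sum_{i<j} n_i n_j (m_i-m_j)^2=\Bigl(\sum_i n_i\Bigr)\Bigl(\sum_i n_i m_i^2\Bigr)-\Bigl(\sum_i n_i m_i\Bigr)^2,
\]
combined with $\sum_i n_i=k^2+3$ and the area constraint $\sum_i n_i m_i^2=a^2$, gives the closed form $\delta(T)=(k^2+2)a^2-M^2$ with $M=a\,\sigma(T)$, from which the conclusion is immediate. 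The two arguments are algebraically the same computation---your identity is exactly what underlies the cited formula $\sigma(T)=\sqrt{k^2+3-\gamma(T)}$---so what your version buys is independence from the reference \cite{StatonTyler} and an explicit equivalence $\delta(T)\geq 0\iff\sigma(T)\leq\sqrt{k^2+2}$, while the paper's version is shorter by outsourcing the identity. Your closing observations (that only the count and area constraints are used, and that the real work lies in tilings with $\delta(T)<0$) are accurate.
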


\begin{proof}
Recall from
\cite{StatonTyler}
that $\sigma(T)=\sqrt{k^2+3-\gamma(T)}$.
In our case,
\[
\gamma(T)=\sum_{i<j} n_in_j (m_i/a-m_j/a)^2
= \frac1{a^2}\sum_{i<j} n_in_j(m_i-m_j)^2.
\]

Suppose $\delta(T)\geq 0$. Then 
$\sum_{i<j} n_in_j(m_i-m_j)^2\geq \sum n_im_i^2$. Dividing
by $a^2$, we get $\gamma(T)\geq \sum n_i (m_i/a)^2 = 
\text{ total area of the tiles } = 1$. Therefore the total length 
of the tiling is
\[
\sigma(T)=\sqrt{k^2+3-\gamma(T)}\leq \sqrt{k^2+3-1}=
\sqrt{k^2+2}<k+1/k,
\]
as required.
\end{proof}

In our calculations of $\delta(T)$ we often encounter
an expression of the form $n_1(m-1)^2-m^2$. We note
here a positivity result about this expression. 

\begin{lemma}\label{sq}
If $n_1\geq 2$ and $m\geq 4$, then
$n_1(m-1)^2-m^2\geq n_1$.
\end{lemma}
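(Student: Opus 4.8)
The plan is to reduce the claim to checking a single polynomial inequality at the boundary value $n_1=2$. First I would introduce the function
$h(n_1)=n_1(m-1)^2-m^2-n_1=n_1\bigl[(m-1)^2-1\bigr]-m^2$,
so that the assertion $n_1(m-1)^2-m^2\geq n_1$ is exactly the statement $h(n_1)\geq 0$. Since $(m-1)^2-1=m(m-2)>0$ for $m\geq 4$ (indeed already for $m\geq 3$), the function $h$ is strictly increasing in $n_1$; hence, using the hypothesis $n_1\geq 2$, it suffices to prove $h(2)\geq 0$.

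The remaining step is a one-line computation: $h(2)=2\bigl[(m-1)^2-1\bigr]-m^2=2(m^2-2m)-m^2=m^2-4m=m(m-4)$, which is $\geq 0$ precisely because $m\geq 4$. This finishes the argument, and it also shows that the estimate is sharp: equality $h(n_1)=0$ holds exactly when $n_1=2$ and $m=4$.

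I do not expect a genuine obstacle here — the statement is elementary and the only care needed is bookkeeping. The hypothesis $m\geq 4$ is used in two distinct places (monotonicity of $h$ in $n_1$, which needs $m\geq 3$, and the base case $h(2)=m(m-4)\geq 0$, which needs $m\geq 4$), while the hypothesis $n_1\geq 2$ enters only through the reduction to the value $n_1=2$. An alternative, entirely equivalent route is to divide through by $m>0$ and rewrite the inequality as $n_1(m-2)\geq m$, which again follows immediately from $n_1\geq 2$ and $m\geq 4$; I would mention whichever phrasing fits the surrounding $\delta(T)$ computations most cleanly.
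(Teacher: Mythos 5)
Your proposal is correct and is essentially the paper's argument: the paper likewise rewrites the claim as $n_1\big((m-1)^2-1\big)-m^2\geq 0$, plugs in the worst case $n_1=2$, and concludes from $2(m^2-2m)-m^2=m(m-4)\geq 0$; your monotonicity-in-$n_1$ framing is just a more explicit phrasing of that same reduction. The extra remarks (sharpness at $n_1=2$, $m=4$, and the equivalent form $n_1(m-2)\geq m$) are fine but not needed.
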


\begin{proof}
 This is a straightforward calculation: $n_1(m-1)^2-m^2-n_1
=n_1\big((m-1)^2-1\big)-m^2\geq 2(m^2-2m)-m^2=m(m-4)\geq 0$.
\end{proof}

\section*{Tilings with two tile sizes} 
In this section we consider the case where there are only two
tile sizes. Thus our tiling $T$ has $n_1$ tiles of side length $1/a$ and
$n_2$ tiles of side length $m_2/a$, where $n_1+n_2=k^2+3$
and $n_1+n_2m_2^2=a^2$. We will show that $T$ can only
be maximal when $n_1=4$ and $m_2=2$, i.e., when $T$ is the
conjectured maximal tiling.

\begin{lemma}\label{n1}
In this case, $n_1\geq 2$.
\end{lemma}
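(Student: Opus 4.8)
The plan is to argue by contradiction. The minimal tile certainly occurs, so $n_1\ge 1$, and it suffices to rule out $n_1=1$. Assume then that $T$ has exactly two tile sizes and $n_1=1$; write $m=m_2\ge 2$, so $T$ consists of a single tile of side $1/a$ together with $k^2+2$ tiles of side $m/a$. The first thing to observe is that Lemma~1 gives nothing here: a one-line computation gives $\delta(T)=n_1n_2(m-1)^2-n_1-n_2m^2=-(k^2+2)(2m-1)-1<0$, so the $\delta$-bound is vacuous and the obstruction must come from the geometry of the tiling itself, not from the quadratic form.

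The key idea is to slice the tiling by horizontal lines. Fix a height $c\in(0,1)$ avoiding the finitely many horizontal edges of the tiles of $T$. The tiles met by the segment $[0,1]\times\{c\}$ have pairwise disjoint interiors and together cover that segment, each in a sub-segment of length equal to its own side; hence their side lengths sum to exactly $1$. Since every tile other than the minimal one has side $m/a$, such a line meets the minimal tile at most once, so we land in one of two cases. If the line misses the minimal tile, then $j\cdot(m/a)=1$ for some integer $j\ge 1$, forcing $m\mid a$. If the line crosses the minimal tile, then $1/a+j\cdot(m/a)=1$ for some integer $j\ge 0$, forcing $a\equiv 1\pmod m$.

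Now both kinds of generic height exist: the set of heights meeting the minimal tile is an interval of length $1/a>0$, and its complement in $(0,1)$ has length $1-1/a>0$ (note $a\ge 2$, indeed $a>m$), and in each case only finitely many values are excluded. Consequently $m\mid a$ and $a\equiv 1\pmod m$ hold simultaneously, so $m\mid 1$, contradicting $m\ge 2$. This contradiction gives $n_1\ge 2$.

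I expect essentially no serious obstacle here; the only care needed is the genericity bookkeeping — making sure the slicing line simultaneously avoids every horizontal tile boundary while either hitting or missing the minimal tile as required — and checking that the argument uses nothing about the large tiles beyond the fact that they all share the side $m/a$. The one conceptual point worth flagging in the write-up is that $\delta(T)$ is negative in this configuration, so, unlike most of the other cases in this section, one genuinely needs a combinatorial fault-line argument rather than the $\delta$-machinery.
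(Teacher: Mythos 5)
Your proof is correct and takes essentially the same approach as the paper: cut the square by a full-width line of tiles through the minimal tile and by one missing it, obtaining $\tfrac{1}{a}+y\,\tfrac{m_2}{a}=1$ and $x\,\tfrac{m_2}{a}=1$, which are incompatible modulo $m_2$. The only difference is cosmetic---you produce the two slices by a genericity argument on horizontal heights, while the paper uses vertical stacks together with the (unproved) remark that the unique minimal tile cannot lie on an edge, so your handling of that point is in fact a bit more self-contained.
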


\begin{proof}
Suppose $n_1=1$. Then the unique smallest tile cannot be 
on an edge of the unit square. Thus in the middle of the
unit square there is a vertical stack $S$
of tiles, including the smallest one, such that $\sigma(S)=1$.
Similarly there is another vertical stack $S'$ of tiles, say
on the edge of the unit square, with $\sigma(S')=1$. 
Note that $S'$ does not include the smallest tile. Thus
$x_2(m_2/a)=1$ for some integer $x_2$. For the stack $S$
we have $(1/a)+y_2(m_2/a)=1$ for some integer $y_2$.
Thus $x_2m_2=y_2m_2+1$, which is a contradiction.
\end{proof}

In our current case, the parameter $\delta(T)$ takes a
particularly simple form.
\[
\delta(T)=n_1n_2(m_2-1)^2-n_1-n_2m_2^2
= n_2[n_1(m_2-1)^2-m_2^2]-n_1.
\]

\begin{lemma}\label{l2}
$T$ is not maximal unless $n_1=4$ and $m_2=2$. 
\end{lemma}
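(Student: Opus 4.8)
The plan is to run a case analysis on $m_2$. Recall
\[
\delta(T)=n_2\bigl[n_1(m_2-1)^2-m_2^2\bigr]-n_1,
\]
and that by Lemma 1 it suffices, for non-maximality, to prove $\delta(T)\ge 0$ (the grid with one cell split into four attains $k+1/k$, so any tiling with $\sigma(T)<k+1/k$ is not maximal). Throughout I would use $n_1\ge 2$ (Lemma \ref{n1}), $m_2\ge 2$, $n_2\ge 1$, and the two identities $n_1+n_2=k^2+3$ and $a^2=n_1+n_2m_2^2$; the second one forces $n_1+n_2m_2^2$ to be a perfect square, and this integrality test is what clears away the degenerate sub-cases.

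For $m_2\ge 4$, Lemma \ref{sq} gives $n_1(m_2-1)^2-m_2^2\ge n_1$, hence $\delta(T)\ge n_1n_2-n_1=n_1(n_2-1)\ge 0$, and Lemma 1 finishes this case. For $m_2=3$ we have $\delta(T)=n_2(4n_1-9)-n_1$; if $n_1\ge 3$ then $4n_1-9\ge n_1$, so again $\delta(T)\ge n_1(n_2-1)\ge 0$. The only leftover is $n_1=2$, which forces $n_2=k^2+1$ and $a^2=9k^2+11$; since $(3k+1)^2-(3k)^2=6k+1\ge 13>11$ for $k\ge 2$, this is not a perfect square and no such tiling exists.

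The substantive case is $m_2=2$, where $\delta(T)=n_2(n_1-4)-n_1$ is often negative, so instead I would compute $\sigma(T)$ directly. Here $\gamma(T)=n_1n_2/a^2$, and substituting $n_1=k^2+3-n_2$ and $a^2=k^2+3+3n_2$ into $\sigma(T)^2=k^2+3-\gamma(T)$ collapses things to $\sigma(T)^2=(k^2+3+n_2)^2/(k^2+3+3n_2)$. Thus $\sigma(T)\ge k+1/k$ is equivalent to $\phi(n_2)\le 0$, where $\phi(n_2)=(k^2+1)^2(k^2+3+3n_2)-k^2(k^2+3+n_2)^2$ is a downward parabola in $n_2$. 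A short computation gives $\phi(k^2-1)=0$, and by Vieta the other root is $(k^2+3)/k^2$, which for $k\ge 2$ lies strictly between $1$ and $2$. Hence $\phi(n_2)>0$, i.e. $\sigma(T)<k+1/k$, for every integer $n_2$ with $2\le n_2\le k^2-2$ (equivalently $5\le n_1\le k^2+1$), so none of these tilings is maximal. The remaining values of $n_2$ compatible with $n_1\ge 2$ are $1,\ k^2-1,\ k^2,\ k^2+1$: the value $n_2=k^2-1$ is precisely $n_1=4$, the asserted exception, while for $n_2=1,k^2,k^2+1$ one gets $a^2=k^2+6,\ 4k^2+3,\ 4k^2+6$ respectively, none of which is a perfect square for $k\ge 2$, so none of those cases occurs.

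The only real obstacle is this $m_2=2$ analysis: one must push the quadratic algebra through cleanly and spot the factorization of $\phi$, and---equally important---one must remember that the counting and area identities are merely necessary, not sufficient, for a tiling to exist (for instance, for $k=2$ the data $n_1=1$, $n_2=6$, $m_2=2$, $a=5$ satisfies every numerical identity, yet the corresponding tiling does not exist and is excluded only by $n_1\ge 2$). So the endpoint values of $n_2$ must be eliminated by combining the perfect-square test with Lemma \ref{n1}. Everything for $m_2\ge 3$ is routine once Lemmas \ref{sq} and \ref{n1} are in hand.
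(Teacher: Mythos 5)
Your proof is correct, and the interesting part of it diverges from the paper. For $m_2\geq 3$ you and the paper do essentially the same thing: Lemma \ref{sq} plus $n_1\geq 2$ for $m_2\geq 4$, the bound $\delta(T)\geq n_1(n_2-1)$ for $m_2=3$, $n_1\geq 3$, and an arithmetic obstruction for $m_2=3$, $n_1=2$ (you trap $9k^2+11$ between consecutive squares where the paper works mod $9$; both are fine). The real difference is the case $m_2=2$. The paper stays with $\delta(T)=(n_1-4)(n_2-1)-4$ and grinds through sub-cases $n_2=1$, $n_2=2$, $n_2\geq 3$ with $n_1=2,\dots,5$, mixing sign estimates on $\delta$ with perfect-square obstructions (including the genuine numerical solution $a=5$, $k=4$, $n_1=17$, which must be killed by $\delta>0$). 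You instead bypass $\delta$ entirely and compute $\sigma(T)$ exactly from the Staton--Tyler identity: $\sigma(T)^2=(k^2+3+n_2)^2/(k^2+3+3n_2)$, so that $\sigma(T)\geq k+1/k$ becomes $\phi(n_2)\leq 0$ for a downward parabola whose roots are $k^2-1$ and $(k^2+3)/k^2\in(1,2)$ (I checked both the reduction and the Vieta step; they are right). This handles every $2\leq n_2\leq k^2-2$ in one stroke -- in particular the paper's awkward $a=5,k=4$ case disappears -- and it also identifies exactly where equality occurs, namely $n_2=k^2-1$, i.e.\ $n_1=4$. What it costs is that the argument is specific to two tile sizes with $m_2=2$ (the quadratic collapse relies on both identities), whereas the paper's $\delta$-based bookkeeping is the template reused for three and four tile sizes later; and your boundary cases still need the same arithmetic that the paper uses: you assert without proof that $k^2+6$, $4k^2+3$, $4k^2+6$ are never squares, which is true but deserves the one-line justifications (factor $(a-k)(a+k)=6$; reduce mod $4$). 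Your closing remark that the numerical data $n_1=1$, $n_2=6$, $a=5$ for $k=2$ passes all the counting tests and is excluded only by Lemma \ref{n1} is a good catch and shows you are using that lemma where it is genuinely needed.
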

\begin{proof}
If $m_2\geq 4$, then by Lemmas \ref{sq} and \ref{n1}, 
we have $\delta(T)\geq 0$ and hence $T$ is not maximal.
So we only need to look at $m_2=2$ and $m_2=3$.

Take the case $m_2=3$ first. Here $\delta(T)=
n_2(4n_1-9)-n_1 = n_1(4n_2-1)-9n_2$. If $n_1\geq 3$,
then $\delta(T)\geq 3(4n_2-1)-9n_2=3(n_2-1)\geq 0$,
so $T$ would not be maximal. Thus $n_1=2$, i.e.,
$T$ has two tiles of side length $1/a$ and $n_2$ tiles
of side length $3/a$. Since the total area is $1$,
we have $2+9n_2=a^2$, but this is impossible
modulo 9. Thus $T$ is not maximal when $m_2=3$.

Now take the case $m_2=2$. Here $\delta(T)=
n_2(n_1-4)-n_1=(n_1-4)(n_2-1)-4$.
There are many cases to consider.

Suppose $n_2=1$. Then $n_1=k^2+2$ and 
$n_1+4=a^2$, which implies that $k^2+6=a^2$.
Thus $(a+k)(a-k)=6$, an impossibility since $a$
and $k$ are both integers.

Suppose $n_2=2$. Similar considerations as
above leads to $(a+k)(a-k)=9$, which means
$a=5$ and $k=4$. Then $n_1=17$ and thus
$\delta(T)>0$. So $T$ is not maximal in this case.

Suppose finally that $n_2\geq 3$. Then $\delta(T)
=(n_1-4)(n_2-1)-4\geq 2n_1-12$. So $n_1\geq 6$
means $\delta(T)\geq 0$ and $T$ is not maximal.
We need to check each value of $n_1$ from $n_1=2$
to $n_1=5$. We know that $n_1+4n_2=a^2$,
so $n_1=2$ and $n_1=3$ are not possible
modulo 4. If $n_1=5$, then $n_2=k^2-2$. 
In this case,  $k=2$ implies $n_2=2$,
which by the previous paragraph
means that $T$ is not maximal. If $k\geq 3$
then $n_2\geq 7$, which implies that $\delta(T)>0$,
again showing that $T$ is not maximal. Thus $n_1=4$
as required.
\end{proof}

\section*{Tilings with three tile sizes}
In this section our tiling $T$ consist of three tile sizes:
$n_1$ tiles of side length $1/a$, $n_2$ tiles of side
length $m_2/a$, and $n_3$ tiles of side length $m_3/a$,
where $m_3>m_2>1$.
In this case Lemma \ref{n1} no longer applies.
In fact, it is possible to have an MTP tiling 
with a unique smallest tile if we have more than
just two tile sizes. 
So in this and subsequent sections we  assume
that $n_1\geq 2$.

The parameter $\delta(T)$ takes the form
\begin{align*}
\delta(T) &= A+B, \\
\text{where } A &= n_2[n_1(m_2-1)^2-m_2^2 + n_3(m_3-m_2)^2]\\
B &= n_3[n_1(m_3-1)^2-m_3^2]-n_1
\end{align*}

\begin{lemma}\label{l3}
Under the assumption that $T$ has three tile sizes
and $n_1\geq 2$, then $T$ is not maximal.
\end{lemma}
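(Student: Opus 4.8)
The plan is to prove that $\delta(T)\ge 0$; then the paper's first lemma ($\delta(T)\ge 0\Rightarrow\sigma(T)<k+1/k$) shows $T$ is not maximal. Throughout I use that $m_3>m_2>1$ with the $m_i$ integers forces $m_2\ge 2$ and $m_3\ge 3$, that $n_1\ge 2$ by hypothesis, and that $n_2,n_3\ge 1$ since there really are three sizes. I split on the value of $m_3$, using the decomposition $\delta(T)=A+B$ already recorded before the lemma.

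If $m_3\ge 4$, then Lemma \ref{sq} applied to $m_3$ gives $n_1(m_3-1)^2-m_3^2\ge n_1$, so $B\ge n_1n_3-n_1=n_1(n_3-1)\ge 0$. For $A$ it suffices that the bracket $n_1(m_2-1)^2-m_2^2+n_3(m_3-m_2)^2$ be nonnegative: if $m_2\ge 4$ use Lemma \ref{sq} again; if $m_2=3$ the bracket equals $4n_1-9+n_3(m_3-3)^2\ge -1+1=0$ using $n_1\ge 2$, $n_3\ge 1$, $(m_3-3)^2\ge 1$; if $m_2=2$ it equals $n_1-4+n_3(m_3-2)^2\ge -2+4>0$. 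Hence $\delta(T)=A+B\ge 0$ whenever $m_3\ge 4$.

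The remaining case is $m_3=3$, which forces $m_2=2$. Here $A=n_2(n_1+n_3-4)$ and $B=n_3(4n_1-9)-n_1$, so $\delta(T)=n_2(n_1+n_3-4)+n_1(4n_3-1)-9n_3$. If $n_1\ge 3$ then $n_1+n_3-4\ge 0$ and $n_1(4n_3-1)-9n_3\ge 3(4n_3-1)-9n_3=3(n_3-1)\ge 0$, so $\delta(T)\ge 0$ again. This reduces everything to the single profile $n_1=2$, $m_2=2$, $m_3=3$, where a short computation gives the clean identity $\delta(T)=(n_2-1)(n_3-2)-4$. Thus $\delta(T)<0$ can happen only for the finitely many shapes $n_3\le 2$ (any $n_2$), $n_3=3$ with $n_2\le 4$, $n_3\in\{4,5\}$ with $n_2\le 2$, or $n_3\ge 6$ with $n_2=1$.

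To kill these I bring in the area count. With $n_1=2$, $m_2=2$, $m_3=3$ we have $n_2+n_3=k^2+1$ and $2+4n_2+9n_3=a^2$, which combine to $a^2=4k^2+6+5n_3$, i.e. $(a-2k)(a+2k)=6+5n_3$ with $a>2k$. Since $a-2k$ and $a+2k$ have the same parity, each offending value of $n_3$ demands a same-parity factorization of a small integer: for $n_3\in\{1,2,4,5\}$ one checks there is no such factorization yielding integer $k\ge 2$; the profile $n_2=1$ forces $n_3=k^2$, where instead $(a-3k)(a+3k)=6$ has no same-parity factorization; and for $n_3=3$ the only solution is $k=5$, $n_2=23$, for which $(n_2-1)(n_3-2)=22\ge 4$ and so $\delta(T)\ge 0$ after all. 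Hence $\delta(T)\ge 0$ in every realizable instance, finishing the proof. I expect this last regime $n_1=2$, $m_2=2$, $m_3=3$ to be the real obstacle: it is the only place where $\delta(T)$ can genuinely be negative, and disposing of it needs the $\delta$-identity together with the parity analysis of $(a-2k)(a+2k)=6+5n_3$; everything else is a direct application of the first lemma and Lemma \ref{sq}.
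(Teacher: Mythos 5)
Your proof is correct and takes essentially the same route as the paper: the $A+B$ decomposition plus Lemma~\ref{sq} disposes of $m_3\ge 4$, the same inequality handles $n_1\ge 3$, and the residual case $n_1=2$, $m_2=2$, $m_3=3$ reduces to the identity $\delta(T)=(n_2-1)(n_3-2)-4$ together with arithmetic constraints coming from the area. The only difference is cosmetic: you fold the tile count into the single equation $(a-2k)(a+2k)=6+5n_3$ (and $(a-3k)(a+3k)=6$ when $n_2=1$) and argue by same-parity factorizations, whereas the paper runs separate congruence checks for the small values of $n_2$ and $n_3$; both eliminate the same finite list of problematic shapes, and your factorization claims check out, including that $n_3=3$ forces $k=5$, $n_2=23$, where $\delta(T)\ge 0$.
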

\begin{proof}
We use the formula for $\delta(T)$ above.
Suppose $m_2\geq 4$. Then $m_3\geq 5$.
Since we assume that $n_1\geq 2$,
Lemma \ref{sq} implies that $A, B\geq 0$, so in this
case $\delta(T)\geq 0$.

If $m_2=3$, then $m_3\geq 4$ so $B\geq 0$. Also,
$A=n_2[4n_1-9+n_3(m_3-2)^2]\geq n_2[8-9+1]=0$,
so again $\delta(T)\geq 0$.

If $m_2=2$ and $m_3\geq 4$, then $B\geq 0$ and
$A=n_2[n_1-4+n_3(m_3-2)^2]\geq n_2[n_1-4+4n_3]\geq 0$,
so $\delta(T)\geq 0$ once more.

It remains to investigate the case $m_2=2$, $m_3=3$
(again, with the assumption that $n_1\geq 2$). There are
unfortunately many case-by-case considerations.

Suppose $n_1\geq 3$. Then 
\begin{align*}
\delta(T)&=n_1(n_2+4n_3-1)+n_2n_3-4n_2-9n_3\\
&\geq 3(n_2+4n_3-1)+n_2n_3-4n_2-9n_3\\
&=(n_2+3)(n_3-1)\geq 0.
\end{align*}

Suppose now that $n_1=2$. In this case we can write $\delta(T)=
(n_2-1)(n_3-2)-4$. We will eliminate the values $n_2=1$, 
$n_2=2$, $n_3=1$, $n_3=2$, and $n_3=3$.

If $n_2=1$, then we have $2+4+9n_3=a^2$, or $a^2=6+9n_3$.
There are no solutions modulo 9.

If $n_2=2$, then $n_3=k^2-1$, so $n_3=3$ or $n_3\geq 8$. 
If $n_3\geq 8$ then $\delta(T)>0$. If $n_3=3$, then $a^2=
2+2\cdot 4 + 3\cdot 9 = 37$, an impossibility. 

If $n_3=1$, then we have $2+4n_2+9=a^2$, or $a^2=11+4n_2$.
This has no solutions modulo 4.

If $n_3=2$, then $n_2=k^2-1$ and $a^2=4n_2+20$. Thus 
$a$ is even, say $a=2b$. Then $b^2=n_2+5$, so $b^2=k^2+4$,
i.e., $(b-k)(b+k)=4$. This leads to $k=0$ or $k=3/2$, an
impossibility.

If $n_3=3$, then $n_2=k^2-2$, so $n_2=2$ or $n_2\geq 7$.
We have already eliminated the possibility of $n_1=2, n_2=2$,
and $n_3=3$ above. If $n_2\geq 7$, then $\delta(T)>0$. 

We conclude that $n_2\geq 3$ and $n_3\geq 4$. Then
$\delta(T)\geq 2\cdot 2 - 4=0$, so we are done.
\end{proof}

It seems possible to show, using \textit{ad hoc} methods,
that any tiling with exactly three tile sizes is not optimal.
But such methods do not seem to generalize to more
numerous tile sizes.

\section*{Tilings with four or more tile sizes}
We first consider tilings with four tile sizes: $T$ consists
of $n_1\geq 2$ tiles with side length $1/a$ and $n_i$ tiles
with side length $m_i/a$, $2\leq i\leq 4$, with
$m_4>m_3>m_2>1$. In this case, $\delta(T)$ takes
the form
\begin{align*}
\delta(T) &= A_2+A_3+A_4,\\
\text{where }A_2&= n_2\big[n_1(m_2-1)^2 -m_2^2+n_3(m_3-m_2)^2+n_4(m_4-m_2)^2\big],\\
A_3&= n_3\big[n_1(m_3-1)^2 -m_3^2+n_4(m_4-m_3)^2\big],\\
A_4 &= n_4\big[n_1(m_4-1)^2-m_4^2\big]-n_1.
\end{align*}

\begin{lemma}\label{l4}
If $T$ has four or more tile sizes and $n_1\geq 2$, then
$\delta(T)\geq 0$.
\end{lemma}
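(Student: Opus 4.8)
The plan is to prove the stronger statement that in the natural decomposition of $\delta(T)$ \emph{every} summand is nonnegative. First I would record the decomposition for an arbitrary number $r\geq 4$ of tile sizes: with $m_1=1<m_2<\cdots<m_r$ and $n_i$ tiles of side length $m_i/a$, expanding $\delta(T)=\sum_{i<j}n_in_j(m_i-m_j)^2-\sum_i n_im_i^2$ and grouping the terms in the same pattern as the displayed $r=4$ formula yields $\delta(T)=\sum_{i=2}^r A_i$, where
\[
A_i=n_i\Bigl[n_1(m_i-1)^2-m_i^2+\sum_{j=i+1}^r n_j(m_j-m_i)^2\Bigr]\qquad (2\le i\le r-1),
\]
and $A_r=n_r\bigl[n_1(m_r-1)^2-m_r^2\bigr]-n_1$. (Here $n_1 n_i(m_i-1)^2$ and $-n_im_i^2$ are placed in $A_i$, and the leftover $-n_1m_1^2=-n_1$ in $A_r$.) This is a routine rearrangement, to be verified once; for $r=4$ it is exactly the formula preceding the lemma.

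The key elementary observation is that, since $m_1=1$ and the $m_i$ are strictly increasing integers, $m_i\geq i$ for every $i$; in particular $m_r\geq 4$, and $m_i\geq 4$ whenever $i\geq 4$. Combining this with Lemma~\ref{sq} and the hypothesis $n_1\geq 2$, I would dispose of the summands in turn. For $A_r$: Lemma~\ref{sq} gives $n_1(m_r-1)^2-m_r^2\geq n_1$, so $A_r\geq n_rn_1-n_1\geq 0$. For $A_i$ with $2\le i\le r-1$ and $m_i\geq 4$ (which covers every $i\geq 4$): Lemma~\ref{sq} makes the bracketed expression at least $n_1\geq 0$, since the remaining sum is nonnegative, so $A_i\geq 0$. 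The only remaining cases are $m_i\in\{2,3\}$, which force $i\in\{2,3\}$.

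For $m_i=3$ the bracket equals $4n_1-9+\sum_{j>i}n_j(m_j-m_i)^2\geq -1+n_{i+1}(m_{i+1}-3)^2\geq -1+1=0$, using $n_1\geq 2$ and the fact that $i\le r-1$ supplies an index $i+1$ with $m_{i+1}\geq 4$, hence $(m_{i+1}-3)^2\geq 1$. For $m_2=2$ the bracket equals $n_1-4+\sum_{j>2}n_j(m_j-2)^2\geq -2+n_4(m_4-2)^2\geq -2+4=2\geq 0$, using $n_1\geq 2$, $n_4\geq 1$, and $m_4\geq 4$ — an index that exists precisely because there are at least four tile sizes. Summing, $\delta(T)=\sum_{i=2}^r A_i\geq 0$.

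I do not expect any step to be a genuine obstacle: the content lies entirely in Lemma~\ref{sq} plus the bookkeeping of the decomposition. The one point worth emphasizing is exactly where the four-sizes hypothesis is used, namely in the $m_2=2$ case (and the $m_3=3$ case), where a tile of side length at least $4/a$ is needed to offset the negative contribution of the small tiles. That slack is unavailable with only three sizes, which is why Lemma~\ref{l3} had to fall back on congruence arguments; here the sign estimate goes through uniformly with no exceptional cases.
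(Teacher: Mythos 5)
Your proof is correct and takes essentially the same route as the paper: the same decomposition $\delta(T)=\sum_{i\geq 2}A_i$, with each summand shown nonnegative using $n_1\geq 2$, Lemma~\ref{sq} when $m_i\geq 4$, and the presence of a fourth tile size (so $m_4\geq 4$) to absorb the deficits in the $m_i=2,3$ brackets. The only differences are cosmetic: you write out the general $r\geq 4$ decomposition that the paper dismisses as ``similar,'' and you treat $m_i\in\{2,3\}$ by direct substitution where the paper uses the bound $2(m_i-1)^2-m_i^2=(m_i-2)^2-2$.
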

\begin{proof}
We prove this for four tile sizes. The proof for five or more tile
sizes is similar. We show that $\delta(T)\geq 0$ in all
cases.

First note that $m_4\geq 4$, so by Lemma~\ref{n1}
we have $A_4\geq 0$. Second, note that
$A_3\geq  n_3\big[2(m_3-1)^2 -m_3^2+n_4(m_4-m_3)^2\big]
\geq (m_3-2)^2-2+n_4(m_4-m_3)^2$; since $m_3\geq 3$,
we have $A_3\geq 0$. Finally, we have
$A_2\geq (m_2-2)^2-2+n_3(m_3-m_2)^2+n_4(m_4-m_2)^2
\geq 0$. Thus $\delta(T)\geq 0$ as required.
\end{proof}

Combining Lemmas \ref{l2}, \ref{l3}, and \ref{l4}, we get
the main result of this note.

\begin{theorem*}
If $T$ is an MTP tiling with $k^2+3$ tiles and $\sigma(T)
>k+1/k$, then $T$ consists of at least three tile sizes 
and $T$ has a unique smallest tile. 
\end{theorem*}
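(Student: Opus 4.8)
The plan is to assemble the theorem from the lemmas already proved, so the real work is bookkeeping about which cases the earlier lemmas actually cover. The theorem has two conclusions, and I would treat them together by contraposition: assume $T$ is an MTP tiling with $k^2+3$ tiles and $\sigma(T)>k+1/k$, and deduce that $T$ has at least three tile sizes and a unique smallest tile. The key observation linking $\sigma$ to the combinatorics is Lemma~1: since $\sigma(T)>k+1/k$ forces $\delta(T)<0$, any configuration for which we can prove $\delta(T)\geq 0$ is automatically excluded.

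First I would dispose of the "one tile size" case: if $T$ has a single tile size then $n_1=k^2+3$ tiles of side $1/a$ with $k^2+3=a^2$, i.e.\ $(a-k)(a+k)=3$, which is impossible for integers $a,k$ with $k\geq 2$ (the factors would have to be $1$ and $3$, giving $a=2,k=1$). So $T$ has at least two tile sizes. Next, suppose $T$ has exactly two tile sizes. By Lemma~\ref{l2}, $T$ is maximal only if $n_1=4,m_2=2$, which is precisely the conjectured tiling with $\sigma(T)=k+1/k$; since we assumed $\sigma(T)>k+1/k$, this case cannot occur. (One should note that Lemma~\ref{l2} invokes Lemma~\ref{n1}, which already rules out $n_1=1$ in the two-size case, so "$T$ has exactly two tile sizes" genuinely reduces to the situation Lemma~\ref{l2} handles.) Hence $T$ has at least three tile sizes, which is the first conclusion.

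For the second conclusion I argue by contradiction: suppose $T$ has at least three tile sizes but more than one minimal tile is \emph{not} the situation I need --- rather, suppose $T$ has at least three tile sizes and also has more than one smallest tile, i.e.\ $n_1\geq 2$. Then Lemmas~\ref{l3} and~\ref{l4} apply directly: if $T$ has exactly three tile sizes and $n_1\geq 2$, Lemma~\ref{l3} gives that $T$ is not maximal, hence $\delta(T)\geq 0$ and $\sigma(T)\leq k+1/k$, contradicting our assumption; if $T$ has four or more tile sizes and $n_1\geq 2$, Lemma~\ref{l4} gives $\delta(T)\geq 0$ directly, again contradicting $\sigma(T)>k+1/k$ via Lemma~1. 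Therefore $n_1=1$, i.e.\ $T$ has a unique smallest tile, which is the second conclusion.

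The main obstacle is not any single hard step --- all the analytic content lives in the earlier lemmas --- but rather making sure the case division is exhaustive and that each branch's hypotheses are actually met. In particular I want to be careful that Lemma~\ref{l3} as stated already bundles in the assumption $n_1\geq 2$, so the proof of the theorem must explicitly note that the only way to have three-or-more tile sizes and escape Lemmas~\ref{l3}/\ref{l4} is $n_1=1$; and I want to double-check that the "exactly two tile sizes" case is fully closed by Lemma~\ref{l2} together with the hypothesis $\sigma(T)>k+1/k$ strictly (the boundary tiling is excluded precisely because the inequality is strict). Once those bookkeeping points are verified, the theorem follows by simply chaining the lemmas.
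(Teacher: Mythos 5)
Your proposal is correct and takes essentially the same route as the paper, which simply obtains the theorem by combining Lemmas \ref{l2}, \ref{l3}, and \ref{l4} with Lemma 1; your write-up merely makes the bookkeeping explicit (the impossible one-tile-size case, the fact that $n_1=4$, $m_2=2$ forces $\sigma(T)=k+1/k$ exactly, and the reduction to $n_1=1$ when there are three or more sizes). One minor wording slip: ``not maximal, hence $\delta(T)\geq 0$'' reverses the implication --- the lemmas' proofs establish either $\delta(T)\geq 0$ or nonexistence of the tiling, and from that $\sigma(T)\leq k+1/k$ follows --- but this does not affect the validity of your argument.
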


So to show that $f_M(k^2+3)=k+1/k$, we only need to
investigate MTP tilings with a unique smallest tile.

\end{document}